\newtheorem{theorem}{Theorem}[section]
\newtheorem{proposition}[theorem]{Proposition}
\theoremstyle{definition}
\newtheorem{definition}[theorem]{Definition}
\newtheorem{remark}[theorem]{Remark}
\newtheorem{example}[theorem]{Example}
\numberwithin{equation}{section}
\renewcommand{\phi}{\varphi}
\newcommand{\Map}{\operatorname{Map}}
\newcommand{\Pic}{\operatorname{Pic}}
\newcommand{\Sym}{\operatorname{Sym}}
\newcommand{\vol}{\operatorname{vol}}
\newcommand{\calA}{\mathcal{A}}
\newcommand{\calG}{\mathcal{G}}
\newcommand{\calO}{\mathcal{O}}
\newcommand{\calM}{\mathcal{M}}
\newcommand{\calV}{\mathcal{V}}
\newcommand{\B}{\operatorname{B}}
\newcommand{\SO}{\operatorname{SO}}
\newcommand{\BSO}{\operatorname{BSO}}
\newcommand{\Spinc}{\operatorname{Spin}^\mathrm{c}}
\newcommand{\U}{\operatorname{U}}
\newcommand{\Aut}{\operatorname{Aut}}
\newcommand{\BAut}{\operatorname{BAut}}
\newcommand{\Diff}{\operatorname{Diff}}
\newcommand{\BDiff}{\operatorname{BDiff}}
\newcommand{\upF}{\mathrm F}
\newcommand{\upH}{\mathrm H}
\newcommand{\rmS}{\mathrm S}
\newcommand{\upZ}{\mathrm Z}
\renewcommand{\c}{\mathrm c}
\renewcommand{\d}{\mathrm d}
\newcommand{\h}{\mathrm h}
\newcommand{\cci}{\mathrm i}
\newcommand{\CC}{\mathbb C}
\newcommand{\RR}{\mathbb R}
\newcommand{\GG}{\mathbb G}
\newcommand{\BGG}{\mathrm B\mathbb G}
\newcommand{\TT}{\mathbb T}
\newcommand{\BTT}{\mathrm B\mathbb T}
\newcommand{\ZZ}{\mathbb Z}
\newcommand{\p}{\partial^{\phantom*}}
\newcommand{\pstar}{\partial^*}
\newcommand{\pbar}{\overline\partial^{\phantom*}}
\newcommand{\pbarstar}{\overline\partial^*}
\author{Markus Szymik}
\title{The stable homotopy theory of \hbox{vortices on Riemann
    surfaces}}
\date{October 2013}
\begin{document}

\maketitle

\renewcommand{\abstractname}{}

\begin{abstract}
  \noindent
The purpose of these notes is to show that the methods introduced by Bauer and Furuta in order to refine the Seiberg-Witten invariants of smooth~$4$-dimensional manifolds can also be used to obtain stable homotopy classes from Riemann surfaces, using the vortex equations on the latter. 
\end{abstract}

\thispagestyle{empty}


\section*{Introduction}

The purpose of these notes is to show that the methods introduced by Bauer and Furuta, see~\cite{Bauer,Bauer:survey,BauerFuruta}, in order to refine the Seiberg-Witten invariants of smooth~$4$-dimensional manifolds can also be used to obtain stable homotopy classes from~2-dimensional manifolds, using the vortex equations on the latter. 

So far, these notes contain barely more than some background material and the necessary analytic estimates to prove this. In Section~\ref{sec:dimension1}, we review some basic facts concerning abelian gauge theory in real dimension 2, and introduce some notation along the way. Gauge symmetries will be discussed in the following Section~\ref{sec:gauge_symmetries}. In Section~\ref{sec:equations}, the vortex equations are introduced. They form a coupled system of non-linear partial differential equations, and this will be packaged into the vortex map in Section~\ref{sec:map}. All analytic estimates are concentrated in the following Section~\ref{sec:estimates}. They imply that the Bauer-Furuta construction can be used to define stable homotopy invariants. This is explained in Section~\ref{sec:invariants}. In Section~\ref{sec:actions_and_families} we sketch how to extend this to obtain invariants for symmetries (group actions) and deformations (families and moduli) of Riemann surfaces. In the final Section~\ref{sec:g=0}, this is specialized to surfaces of genus~$0$. 

Implications and applications will be added as time permits. For example, a generalization of the equations studied here, the so called {\em symplectic vortex equations}, have been introduced in order to define gauged Gromov-Witten invariants for symplectic quotients of Hamiltonian group actions, see~\cite{CGMS}. The case studied in this paper is the standard scalar circle action on the complex line, where the Marsden-Weinstein quotient reduces to a point.

Some of the results in this paper date back from 2004, and the author would like to thank Stefan Bauer and Kim Fr\o yshov for discussions in these early days. Since then,~I have had the pleasure to discuss the material with many other people, and I would in particular like to thank Marek Izydorek, Oscar Randal-Williams, and Maciej Starostka for their interest. I am grateful to Oscar Garc\'\i a-Prada for making~\cite{GarciaPrada:Seiberg-Witten1} and~\cite{GarciaPrada:Seiberg-Witten2} available to me. The production of these notes from a long-dormant draft has been greatly supported by Nuno Rom\~{a}o, who invited me to the Hausdorff Trimester Program on Mathematical Physics, where I have had the pleasure to come into contact with a re-flourishing vortex community, and I thank him heartily for his hospitality. This research has also been supported by the Danish National Research Foundation through the Centre for Symmetry and Deformation (DNRF92).


\section{Line bundles on Riemann surfaces}\label{sec:dimension1}

In this section we review the necessary background material about connections on line bundles over Riemann surfaces.

\subsection{Riemann surfaces} 

Let~$X$ be a closed oriented connected surface with a Riemann metric.  The
Hodge star operator~$\star$ on~$1$-forms induces a complex structure on~$X$.
This structure is automatically integrable, so that~$X$ is a complex curve. The
metric on~$X$ is automatically K\"ahler, and the K\"ahler
form~$\omega=\star1$ agrees with the volume form. In particular,
\[
\int_X\omega=\vol(X).
\]

If~$\phi$ is a section in some Hermitian vector bundle on~$X$, then we write~$|\phi|$ for the function on~$X$ which assigns to a point~$x$ the length~$|\phi(x)|$ of~$\phi(x)$, and we write
\[
\|\phi\|^2=\int_X|\phi|^2,
\]
omitting---as is often done---the notation for the volume form from the integrand.

\subsection{Unitary connections on Hermitian line bundles} 

Let~$L$ be a complex line bundle over a Riemann surface~$X$ as above, equipped with a Hermitian metric. Up to isomorphism, these structures are determined by the degree~$\deg(L)$ of~$L$. 

The curvature form~$\upF_A$ of a unitary connection~$A$ on~$L$ is an imaginary~$2$-form on~$X$. If~$B$ is a chosen base connection, and~$A=B+\cci\alpha$ for some real~$1$-form~$\alpha$, then the equation
\[
\upF_A=\upF_B+\cci\cdot\d\alpha
\]
describes the curvature of~$A$ in terms of~$B$ and~$\alpha$. 

We may write the curvature as a multiple~$\upF_A=\star\upF_A\omega$ of the volume form, with the function~$\star\upF_A$, using the Hodge operator~$\star$ on~$X$. 

The real~$2$-form~$(\cci/2\pi)\upF_A$ is the first Chern form, which represents the first~(real) Chern class of~$L$. In particular,
\[
\int_X\frac\cci{2\pi}\upF_A=\deg(L).
\]
Conversely, using the above parametrization of the space of unitary connections by the space of real~$1$-forms, it is easy to see that every representative of the first Chern class arises in this way from a unitary connection.

\begin{proposition}\label{prop:harmonic}
If the unitary connection~$A$ has a harmonic curvature form, then the equality
\[
\frac\cci{2\pi}\upF_A=\frac{\deg(L)}{\vol(X)}\omega
\]
holds, and it describes the first Chern form.
\end{proposition}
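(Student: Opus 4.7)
The plan is to exploit the fact that a harmonic $2$-form on a closed connected surface is, after applying the Hodge star, a harmonic function, hence constant. Once constancy of $\star\upF_A$ is established, the proportionality constant is pinned down by integrating against the volume form, using the already-noted identities $\int_X(\cci/2\pi)\upF_A=\deg(L)$ and $\int_X\omega=\vol(X)$.

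In more detail, I would first recall that the Hodge star commutes with the Hodge Laplacian, so $\upF_A$ is harmonic if and only if the function $\star\upF_A$ (see the parametrization $\upF_A=\star\upF_A\,\omega$ introduced above) is harmonic. Since $X$ is closed and connected, any harmonic function on $X$ is constant; hence $\star\upF_A$ equals some constant $c\in\cci\RR$, and consequently
\[
\frac\cci{2\pi}\upF_A=\frac{\cci c}{2\pi}\,\omega.
\]

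Next I would determine the constant by integrating this identity over~$X$. The left-hand side integrates to $\deg(L)$ by the Chern--Weil computation recalled before the statement, while the right-hand side integrates to $(\cci c/2\pi)\vol(X)$. Equating the two yields $\cci c/2\pi=\deg(L)/\vol(X)$, which is exactly the claimed formula.

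There is no genuine obstacle here; the only mild subtlety is to remember that harmonicity of a $2$-form on a surface is equivalent to constancy of its Hodge-star, a fact which really rests on the compactness and connectedness of~$X$. Everything else reduces to a normalization of the constant against the two integrals just mentioned.
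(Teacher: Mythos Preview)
Your argument is correct and follows essentially the same route as the paper's own proof: the paper simply asserts that a harmonic curvature form is a (locally) constant multiple of the volume form and then determines the constant by integration, while you supply the justification (via the Hodge star and harmonic functions being constant on closed connected surfaces) for that first step. There is nothing materially different between the two.
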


\begin{proof}
If the connection~$A$ has a harmonic curvature form, then the
curvature form is a~(locally) constant multiple of the volume form,
and conversely. An easy integration determines the constant.
\end{proof}

\subsection{The Weitzenb\"ock formula} 

If~$A$ is a unitary connection on a Hermitian line bundle~$L$, then the complex structure on the Riemann surface~$X$ splits the covariant differential: \hbox{$\d_A=\p_A+\pbar_A$}. 

\begin{remark}\label{rem:holomorphic}
The operator~$\pbar_A$ defines a holomorphic structure on~$L$.
Conversely, a holomorphic structure and the Hermitian metric determine
a unique unitary connection on~$L$, the Chern connection.
\end{remark}

\begin{proposition}[\bf Weitzenb\"ock formula]\label{prop:Weitzen}
\[
  \d^*_A\d^{\phantom*}_A=2\pbarstar_A\pbar_A+\cci\star \upF_A
\]
\end{proposition}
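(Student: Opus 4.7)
The plan is to compare both sides by splitting the Laplacian $\d^*_A\d^{\phantom*}_A$ into its pure-type pieces and then converting the $(1,0)$-piece into the $(0,1)$-piece, at the cost of a zeroth-order curvature term. Both ingredients are manifestations of the K\"ahler identities, specialized to the very restrictive situation of complex dimension one.

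For the splitting, write $\d^{\phantom*}_A=\p_A+\pbar_A$, so that $\p_A$ sends sections into $\Omega^{1,0}(L)$ and $\pbar_A$ sends them into $\Omega^{0,1}(L)$. Since these are orthogonal summands of the complexified $1$-forms, the formal adjoints assemble as $\d^*_A=\pstar_A+\pbarstar_A$, and the cross composites $\pstar_A\pbar_A$ and $\pbarstar_A\p_A$ vanish on sections for type reasons. Hence
\[
\d^*_A\d^{\phantom*}_A=\pstar_A\p_A+\pbarstar_A\pbar_A,
\]
and the proposition reduces to the identity $\pstar_A\p_A-\pbarstar_A\pbar_A=\cci\star\upF_A$ of zeroth-order operators on sections.

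To extract the curvature, begin with $\d_A^2=\upF_A$, interpreted as multiplication on sections. Since a Riemann surface carries no $(2,0)$ or $(0,2)$-forms, squaring $\d^{\phantom*}_A=\p_A+\pbar_A$ collapses to its $(1,1)$-piece, giving $\p_A\pbar_A\phi+\pbar_A\p_A\phi=\upF_A\,\phi$ for every section~$\phi$. Now apply the adjoint $\Lambda$ of wedging with $\omega$ to this identity. On the right, using $\upF_A=(\star\upF_A)\,\omega$ and $\Lambda\omega=1$, one obtains $(\star\upF_A)\,\phi$. On the left, the standard K\"ahler identities $[\Lambda,\p_A]=\cci\pbarstar_A$ and $[\Lambda,\pbar_A]=-\cci\pstar_A$, combined with the vanishing of $\Lambda$ on $1$-forms, yield
\[
\Lambda(\p_A\pbar_A\phi+\pbar_A\p_A\phi)=\cci\pbarstar_A\pbar_A\phi-\cci\pstar_A\p_A\phi.
\]
Equating the two sides and dividing by $\cci$ gives the displayed identity, which substituted into the first step proves the proposition.

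The main obstacle is purely bookkeeping: the factor of $\cci$ in the statement reflects both the imaginary character of the curvature of a unitary connection and the $\cci$ that appears in the K\"ahler identities, and it is easy to lose a sign if the conventions for $\Lambda$, for $\star$, or for the decomposition $\d^{\phantom*}_A=\p_A+\pbar_A$ are not the ones used elsewhere in the paper. Once these conventions are fixed, however, the computation above verifies the formula directly, and no analytic input beyond the definition of the formal adjoints is required.
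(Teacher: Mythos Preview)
Your proof is correct and follows essentially the same route as the paper: both start from the type decomposition $\d^*_A\d^{\phantom*}_A=\pstar_A\p_A+\pbarstar_A\pbar_A$, invoke $\upF_A=\p_A\pbar_A+\pbar_A\p_A$, and then use the K\"ahler identities to produce the curvature term. The only cosmetic difference is that you phrase the K\"ahler identities via the Lefschetz adjoint $\Lambda$, whereas the paper uses the Hodge star form $\pbarstar_A=\cci[\p_A,\star]$ and $\pstar_A=-\cci[\pbar_A,\star]$; in complex dimension one these coincide on $2$-forms, so the two computations are the same.
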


\begin{proof}
Note that
\[
  \d^*_A\d^{\phantom*}_A=\pstar_A\p_A+\pbarstar_A\pbar_A.
\]
One K\"ahler identity
$\pbarstar_A=\cci[\p_A,\star]$ 
implies~$\pbarstar_A=-\cci\star\p_A$ for~$(0,1)$-forms, hence
\[
  \pbarstar_A\pbar_A=-\cci\star\p_A\pbar_A
\] 
on sections. Similarly, the other K\"ahler identity
$\pstar_A=-\cci[\pbar_A,\star]$ leads to
\[
  \pstar_A\p_A=\cci\star\pbar_A\p_A.
\]
Together with the identity~$\upF_A=\pbar_A\p_A+\p_A\pbar_A$, we obtain
\[
  \cci\star \upF_A=\cci\star\upF_A=\cci\star\pbar_A\p_A+\cci\star\p_A\pbar_A,
\]
and from this the formula is derived.
\end{proof}


\section{Gauge symmetries}\label{sec:gauge_symmetries}

If~$(X,L)$ is a pair as in Section~\ref{sec:dimension1}, so that~$X$ is a Riemann surface and~$L$ is a Hermitian line bundle over it, then the automorphism group~$\Aut(X,L)$ is the group of pairs~$(f,u)$, where~$f$ is an orientation preserving diffeomorphism of~$X$, and~$u$ is an isometric isomorphism~\hbox{$L\cong f^*L$}. Note that such an isomorphism~$L\cong f^*L$ always exists as long as~$f$ is orientation preserving. Therefore, the automorphism group sits in an extension
\begin{equation}\label{eq:extension}
1\longrightarrow\Map(X,\TT)
\longrightarrow\Aut(X,L)
\longrightarrow\Diff(X)
\longrightarrow1,
\end{equation}
where~$\Diff(X)$ denotes the group of orientation preserving diffeomorphism of the Riemann surface~$X$, and
\[
\calG(X)=\Map(X,\TT)
\] 
is the {\it gauge group} of maps with values in the circle group~$\TT$.

\subsection{The structure of the gauge group}

The gauge group, just as every topological group, sits in an extension
\begin{equation}\label{eq:G_extension}
1\longrightarrow\calG_0(X)
\longrightarrow\calG(X)
\longrightarrow\pi_0\calG(X)
\longrightarrow1,
\end{equation}
where~$\calG_0(X)$ is the component of the constant maps, and~$\pi_0\calG(X)$ is the group of components.

\begin{proposition}
The embedding~$\TT\subseteq\calG_0(X)$ of the circle group, thought of as the subgroup of constant maps, into their component, is a homotopy equivalence. 
\end{proposition}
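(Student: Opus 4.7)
My plan is to exhibit $\calG_0(X)$ as a topological direct product $\TT\times V$ in which~$V$ is a contractible topological vector space and the first factor is exactly the subgroup of constant maps. Once this is done, the inclusion in the statement becomes the inclusion of the first factor, hence a homotopy equivalence with homotopy inverse given by projection onto~$\TT$.

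The main tool is the exponential homomorphism
\[
e\colon\Map(X,\RR)\longrightarrow\calG(X),\qquad e(f)(x)=\exp(2\pi\cci f(x)),
\]
which is a continuous homomorphism of abelian topological groups. By the covering-space lifting criterion applied to the universal cover~$\RR\to\TT$, the image of~$e$ consists of precisely those maps~$u\colon X\to\TT$ which are null-homotopic, i.e.\ the subgroup~$\calG_0(X)$. The kernel consists of integer-valued functions on~$X$; since~$X$ is connected, these are the integer-valued constants. So~$e$ induces a short exact sequence
\[
0\longrightarrow\ZZ\longrightarrow\Map(X,\RR)\longrightarrow\calG_0(X)\longrightarrow0
\]
with $\ZZ\subset\Map(X,\RR)$ sitting as the integer constants.

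Next I would split $\Map(X,\RR)$ as a topological vector space into $\RR\oplus V$, where $\RR$ is the subspace of real constants and $V$ is the closed complement $\{f\mid\int_Xf\,\omega=0\}$, using the K\"ahler form~$\omega$ from Section~\ref{sec:dimension1} to define the averaging projection $f\mapsto\vol(X)^{-1}\int_Xf\,\omega$ onto the constants. Because the kernel $\ZZ$ sits entirely in the first summand, this decomposition descends to the quotient to give
\[
\calG_0(X)\;\cong\;(\RR/\ZZ)\oplus V\;=\;\TT\oplus V,
\]
and under this identification the subgroup of constant maps is precisely the summand $\TT\oplus\{0\}$. Since $V$ is a topological vector space it is contractible, which finishes the proof.

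I do not anticipate any serious obstacle; the one point requiring care is the identification of the image of~$e$ with~$\calG_0(X)$. One direction is immediate because~$\RR$ is contractible, and the other is the standard fact that a map $u\colon X\to\TT$ lifts through the universal cover if and only if its periods $\int_\gamma u^{-1}\d u/(2\pi\cci)$ vanish on all loops~$\gamma$, which in turn is equivalent to $u$ being null-homotopic.
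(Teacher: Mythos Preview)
Your argument is correct and follows essentially the same route as the paper: both use the exponential surjection $\Omega^0_X\to\calG_0(X)$ with kernel~$\ZZ$, and both exploit the splitting of $\Omega^0_X$ into constants plus functions of mean zero. The only cosmetic difference is that you explicitly exhibit the product decomposition $\calG_0(X)\cong\TT\times V$, whereas the paper phrases the conclusion via the contractibility of the quotient $\calG_0(X)/\TT\cong\Omega^0_X/\RR$ (and then introduces your complement~$V$ as $\calG_0(X,\vol)$ immediately afterward).
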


\begin{proof}
By covering space theory, the map
\[
\Omega^0_X\longrightarrow\calG_0(X),\,f\longmapsto\exp(2\pi\cci f),
\]
is surjective. The kernel is the subgroup of maps with integral values, which can and will be identified with~$\ZZ$. Therefore, the extension
\[
0\longrightarrow\ZZ
\longrightarrow\RR
\longrightarrow\TT
\longrightarrow1,
\]
embeds into the extension
\[
0\longrightarrow\ZZ
\longrightarrow\Omega^0_X
\longrightarrow\calG_0(X)
\longrightarrow1,
\]
to show that there is an isomorphism
\[
\Omega^0_X/\RR\cong\calG_0(X)/\TT,
\]
and the left hand side is a contractible vector space.
\end{proof}

For later purposes, it will be useful to identify a complementary subgroup of~$\TT$ in the component~$\calG_0(X)$. A particularly convenient choice is the image
\[
\calG_0(X,\vol)=\{u\in\calG_0(X)\,|\,u=\exp(2\pi\cci f)\text{ for some }f\text{ with }\int_Xf=0\}
\]
of the complement
\[
\{f\in\Omega^0_X\,|\,\int_Xf=0\}\subseteq\Omega^0_X
\] 
of the space of constant functions.

\begin{remark}
There are also other complements of~$\TT$ in~$\calG_0(X)$, such as
\[
\calG_0(X,x)=\{u\in\calG_0(X)\,|\,u(x)=1\},
\]
but these depend on the choice of a base point~$x$ in~$X$, and they are therefore not canonical. One may, however, have reasons to prefer such a choice.
\end{remark}

\begin{remark}\label{rem:gerbe}
The extension~\eqref{eq:extension} of groups leads to a gerbe on the classifying space~$\BDiff(X)$. If~$L$ happens to be the tangent bundle of~$X$, then the extension~\eqref{eq:extension} is split by the derivative. In general, the isomorphism class of the gerbe depends on the residue class of~$\deg(L)$ modulo the degree of the tangent bundle, which is the Euler characteristic of~$X$.
\end{remark}

The group
\[
\pi_0\calG(X)=[X,\TT]=\upH^1(X;\ZZ)
\]
of components is the group of homotopy classes of maps~$X\to\TT$, and its structure does depend on the topology of~$X$: it is free abelian of rank~$2g(X)$, where~$g(X)$ denotes the genus of~$X$. This implies that the extension~\eqref{eq:G_extension} can be split, but there is no canonical splitting, and we will fortunately never have the need to choose one.

\subsection{The Picard torus}\label{sec:Picard_torus}

Gauge theory provides for geometric models of some classifying spaces related to gauge groups, and this will be explained now.

The gauge group~$\calG(X)$ acts on the space~$\calA(L)$ of unitary connections on~$L$ via conjugation:
\[
\d_{u\cdot A}=u\,\d_Au^{-1}=d_A+u\,\d u^{-1}=d_A-u^{-1}\d u.
\] 

\begin{example}
If~$u=\exp(2\pi\cci f)$ for some real function~$f$, then
\begin{equation}\label{eq:exp_action}
u^{-1}\d u=2\pi\cci\cdot\d f.
\end{equation}
In particular, if~$u$ is constant, then~$u\cdot A=A$, so that the subgroup~$\TT$ of constant maps stabilises all connections.
\end{example}

In any case, we have
\begin{equation}\label{eq:curvature_is_invariant}
\upF_{u\cdot A}=\upF_A.
\end{equation}
In other words, gauge equivalent connections have the same curvature. The converse need not hold. In fact, the set of gauge equivalence classes of unitary connections with a fixed curvature form is a~$2g(X)$-dimensional torus. We will explain this for a particular choice of curvature form: the harmonic one. To do so, let
\[
\calA_\h(L)\subseteq\calA(L)
\]
be the subspace of unitary connections with harmonic curvature. By~\eqref{eq:curvature_is_invariant}, this is~$\calG(X)$-invariant. If a unitary connection~$B$ has harmonic curvature, then so has~$B+\cci\alpha$ if and only if~$\d\alpha=0$. This gives~$\calA_\h(L)$ the structure of a non-empty torsor for the vector space~$\upZ^1_X$ of closed real~$1$-forms. In other words, any choice of such a~$B$ gives rise to an identification
\begin{equation}\label{eq:identification}
\upZ^1_X\cong\calA_\h(L).
\end{equation}
Equation~\eqref{eq:exp_action} shows that~$u=\exp(2\pi\cci f)$ in~$\calG_0(X)$ acts on this space by subtraction of~$2\pi\d f$. This means that the orbit spaces can be identified with certain quotient spaces as follows.

\begin{proposition}
Any choice of a unitary connection~$B$ with harmonic curvature determines, via~\eqref{eq:identification}, an identification
\[
\calA_\h(L)/\calG_0(X)\cong\upZ^1_X/\B^1_X=\upH^1(X;\RR)
\] 
of the orbit space with the first de Rham cohomology of~$X$. Similarly, there result identifications~$\calA(L)/\calG_0(X)\cong\Omega^1_X/\B^1_X$.
\end{proposition}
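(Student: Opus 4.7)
The plan is to translate the $\calG_0(X)$-action on $\calA_\h(L)$ across the affine identification~\eqref{eq:identification} and read off the orbit space directly.

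To set things up, I fix a base connection $B\in\calA_\h(L)$, so that $\alpha\mapsto B+\cci\alpha$ identifies $\upZ^1_X$ with $\calA_\h(L)$. By the earlier proposition on the structure of $\calG_0(X)$, every $u\in\calG_0(X)$ can be written as $u=\exp(2\pi\cci f)$ for some $f\in\Omega^0_X$. Combining the formula $u\cdot A=A-u^{-1}\d u$ with equation~\eqref{eq:exp_action} then gives
\[
u\cdot(B+\cci\alpha)=B+\cci(\alpha-2\pi\,\d f),
\]
so that under~\eqref{eq:identification} the $\calG_0(X)$-action on $\upZ^1_X$ is simply translation by $-2\pi\,\d f$.

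The orbit of $\alpha$ is therefore the coset $\alpha+\B^1_X$, since $2\pi\cdot\B^1_X=\B^1_X$ and every exact real $1$-form arises as $\d f$ for some $f\in\Omega^0_X$. Passing to orbit spaces yields
\[
\calA_\h(L)/\calG_0(X)\cong\upZ^1_X/\B^1_X=\upH^1(X;\RR).
\]
The second identification proceeds identically: using the same $B$ to parametrize all of $\calA(L)$ by $\Omega^1_X$ via $\alpha\mapsto B+\cci\alpha$, the display above still describes the $\calG_0(X)$-action, which is again translation by exact forms. Hence $\calA(L)/\calG_0(X)\cong\Omega^1_X/\B^1_X$.

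There is no real obstacle here: the substantive input is the surjectivity of $\Omega^0_X\to\calG_0(X)$ from the earlier proposition, which ensures that every gauge transformation in $\calG_0(X)$ is realized by a globally defined function $f$, so that the orbits of the translation action are exactly the cosets of $\B^1_X$. The cohomological interpretation is then just the definition of the first de Rham cohomology.
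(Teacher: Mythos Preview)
Your argument is correct and matches the paper's own reasoning: the paper observes, in the sentence immediately preceding the proposition, that equation~\eqref{eq:exp_action} makes $u=\exp(2\pi\cci f)\in\calG_0(X)$ act by subtraction of $2\pi\,\d f$, and then states the proposition without further proof. Your write-up simply spells out this observation in full, including the surjectivity of $\Omega^0_X\to\calG_0(X)$ needed to identify the orbits with cosets of $\B^1_X$; there is no difference in approach.
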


\begin{remark}
Since the subgroup~$\TT$ of constant maps acts trivially, we could have used one if its complements in~$\calG_0(X)$ throughout.
\end{remark}

The residual action by~$\pi_0\calG(X)=\upH^1(X;\ZZ)$ shows the following.

\begin{proposition}
Any choice of a unitary connection~$B$ with harmonic curvature determines an identification of the orbit space~$\calA_\h(L)/\calG(X)$ with a~$2g(X)$-dimensional torus.
\end{proposition}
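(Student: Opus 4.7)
The plan is to combine the previous proposition (which identifies $\calA_\h(L)/\calG_0(X)$ with the real vector space $\upH^1(X;\RR)$) with a careful analysis of the residual action of the component group $\pi_0\calG(X)=\upH^1(X;\ZZ)$, and to show that this action is by translation through the standard lattice inclusion, so that the quotient is the real torus $\upH^1(X;\RR)/\upH^1(X;\ZZ)$ of dimension $2g(X)$.

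First I would fix the base connection $B$ with harmonic curvature, giving via the preceding proposition the identification
\[
\calA_\h(L)/\calG_0(X)\cong\upH^1(X;\RR).
\]
Since $\calG_0(X)$ is a normal subgroup of $\calG(X)$, the quotient group $\pi_0\calG(X)$ acts on the orbit space, and the full orbit space $\calA_\h(L)/\calG(X)$ is the quotient of $\upH^1(X;\RR)$ by this induced action. So the problem reduces to identifying this action explicitly.

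Next I would compute the action of a component represented by a map $u\colon X\to\TT$. The gauge action reads $u\cdot A=A-u^{-1}\d u$, so on the torsor $\calA_\h(L)\cong\upZ^1_X$ (with $B$ as origin) the element $u$ acts by subtracting the closed real $1$-form $(-\cci)u^{-1}\d u=\tfrac{1}{2\pi\cci}\cdot 2\pi\,u^{-1}\d u$; on $\calG_0(X)$ this recovers \eqref{eq:exp_action}, but on a general component the form $u^{-1}\d u$ is closed without being exact. The induced action on $\upH^1(X;\RR)$ is therefore translation by the de Rham class $[\tfrac{1}{2\pi\cci}u^{-1}\d u]$. The key point, which I would verify by the standard argument (lift $u$ locally to $\exp(2\pi\cci f)$ and integrate over loops), is that this class is precisely the image of the homotopy class $[u]\in[X,\TT]=\upH^1(X;\ZZ)$ under the coefficient inclusion $\upH^1(X;\ZZ)\to\upH^1(X;\RR)$.

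With the action pinned down as translation through this lattice inclusion, the conclusion is immediate: the image of $\upH^1(X;\ZZ)$ in $\upH^1(X;\RR)$ is a full rank lattice (since $\upH^1(X;\ZZ)$ is free abelian of rank $2g(X)$ and integration of harmonic $1$-forms against an integral basis of $\upH_1(X;\ZZ)$ is non-degenerate), so the quotient
\[
\calA_\h(L)/\calG(X)\cong\upH^1(X;\RR)/\upH^1(X;\ZZ)
\]
is a torus of real dimension $2g(X)$. The main step where care is required is the identification of the de Rham class of $(1/2\pi\cci)u^{-1}\d u$ with the integral cohomology class of $u$; everything else is formal.
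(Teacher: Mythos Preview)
Your approach is exactly the one the paper has in mind: it states the previous proposition and then simply remarks that ``the residual action by~$\pi_0\calG(X)=\upH^1(X;\ZZ)$ shows the following,'' leaving the details you spell out to the reader. One small bookkeeping slip: you correctly compute that $u$ acts by subtracting $(-\cci)u^{-1}\d u$, but then say the induced translation is by the class $[\tfrac{1}{2\pi\cci}u^{-1}\d u]$, dropping a factor of $2\pi$ (compare the paper's own computation that $u=\exp(2\pi\cci f)$ acts by subtracting $2\pi\,\d f$); the lattice is $2\pi\,\upH^1(X;\ZZ)$ rather than $\upH^1(X;\ZZ)$, which of course makes no difference to the conclusion.
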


In contrast, the orbit space~$\calA(L)/\calG(X)$ only has the homotopy type of a~$2g(X)$-dimensional torus.

\begin{definition}
The orbit space~$\calA_\h(L)/\calG(X)$ will be called the {\it Picard torus}, and it will be denoted by~$\Pic(X,L)$.
\end{definition}

Note that this torus does no come with a preferred base-point; we had to choose the base point~$[B]$. And, {\it a fortiori}, it does not come with a group structure. 

\begin{remark}
The Picard torus~$\Pic(X,L)$ just defined can be canonically identified with one single component of what one would usually define to be the Picard group~$\Pic(X)$ of~$X$: the one corresponding to~$L$. The identification sends a holomorphic line bundle to its Chern connection, compare with Remark~\ref{rem:holomorphic}.
\end{remark}


\section{The vortex equations}\label{sec:equations}

Let again~$(X,L)$ be a Riemann surface~$X$ with a Hermitian line bundle~$L$ on it. 

\begin{definition}
The~{\it vortex equations} for a section~$\phi$ of~$L$ and a unitary
connection~$A$ on~$L$ are
\begin{eqnarray}
   \pbar_A\phi&=&0\label{eq:ve1}\\
   \upF_A&=&\frac\cci2(|\phi|^2-1)\omega\label{eq:ve2}.
\end{eqnarray}
\end{definition}
Since it will be convenient to work with real~$2$-forms rather than with imaginary~$2$-forms, the second vortex equation will also be written in form
\[
   \cci\upF_A=\frac12(1-|\phi|^2)\omega.
\]
And, later on, we will also have occasion to replace~\eqref{eq:ve2} with the more general~$\tau$-vortex equation
\begin{eqnarray}
   \upF_A&=&\frac\cci2(|\phi|^2-\tau)\omega\label{eq:ve2tau}
\end{eqnarray}
which involves another (real) parameter~$\tau$, the {\it Taubes parameter}.

The vortex equations can be understood as a 2-dimensional version of the Seiberg-Witten equations. They have been studied by Ginzburg, Landau, Jaffe, Taubes, Bradlow, Garc\'\i a-Prada and many others. See~\cite{JaffeTaubes}, \cite{Taubes:CommMathPhys}, \cite{Bradlow}, \cite{GarciaPrada:CommMathPhys}, \cite{GarciaPrada:LMS}, \cite{GarciaPrada:Survey}, \cite{Jost}, \cite{Taues:GW=>SW}
 and the references therein.

\subsection{Symmetries}

The gauge group~$\calG(X)$ acts on the set of pairs~$(\phi,A)$ via
\[
u(\phi,A)=(uA,u\phi),
\]
where the action~$uA$ is explained in Section~\ref{sec:Picard_torus}, and~$u\phi$ is just the usual pointwise multiplication of a section by a function. The space~$\calV(X,L)$ of solutions to the vortex equations is easily checked to be invariant: The first vortex equation~\eqref{eq:ve1} implies
\[
\pbar_{uA}(u\phi)=u\pbar_A(u^{-1}u\phi)=u0=0,
\]
and the second vortex equation~\eqref{eq:ve2} for~$(uA,u\phi)$ follows from the one for~$(A,\phi)$ using~\hbox{$\upF_{uA}=\upF_A$} and~$|u\phi|=|u|\cdot|\phi|=|\phi|$.

If a function~$u$ stabilises a connection~$A$, then~$u^{-1}\d u=0$, which is the case if and only if the function~$u$ is~(locally) constant. However a constant function~$u\not=1$ stabilises a section~$\phi$ if and only if~$\phi=0$. Therefore, the stabiliser of a pair~$(\phi,A)$ is the entire group~$\TT$ if~$\phi=0$, and trivial else. If we define
\[
\calV^\times(X,L)=\{(\phi,A)\in\calV(X,L)\,|\,\phi\not=0\},
\]
then~$\TT$ acts freely on~$\calV^\times(X,L)$, and the projection to the orbit space
\[
\calM^\times(X,L)=\calV^\times(X,L)/\TT
\]
defines a principal~$\TT$-bundle over~$\calM^\times(X,L)$.

 \subsection{Properties of solutions}

Let us set
\[
  \tau_0=\frac{4\pi\deg(L)}{\vol(X)}.
\]
The factor~$4\pi$ should be thought of as the volume of the Riemann sphere of radius~$1$.

\begin{proposition}\label{prop:length}
Every solution~$(\phi,A)$ of the vortex equations satisfies
\[
  \|\phi\|^2=(1-\tau_0)\cdot\vol(X).     
\]
\end{proposition}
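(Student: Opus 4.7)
The plan is to integrate the second vortex equation over $X$ and compare the result with the Chern class identity $\int_X (\cci/2\pi)\upF_A = \deg(L)$ recorded earlier in Section~\ref{sec:dimension1}. The first vortex equation will not be needed; only~\eqref{eq:ve2} together with the topological normalization of the curvature enters.

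Concretely, I would first rewrite~\eqref{eq:ve2} as
\[
\frac{\cci}{2\pi}\upF_A=-\frac{1}{4\pi}(|\phi|^2-1)\omega,
\]
using $\cci\cdot\cci=-1$. Integrating both sides over~$X$ and using $\int_X\omega=\vol(X)$ together with the definition of $\|\phi\|^2$ gives
\[
\deg(L)=-\frac{1}{4\pi}\bigl(\|\phi\|^2-\vol(X)\bigr).
\]
Solving for $\|\phi\|^2$ and substituting $\tau_0=4\pi\deg(L)/\vol(X)$ yields the claimed identity.

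There is no real obstacle here: the argument is a one-line integration, and the only thing to take care of is the sign coming from $\cci^2=-1$ and the consistent placement of the factors $2\pi$ and $4\pi$. The statement is essentially a restatement of the second vortex equation at the level of cohomology, and it makes clear both that solutions can only exist when $\tau_0\le 1$ (a Bradlow-type bound) and that $\tau_0=1$ forces $\phi\equiv 0$.
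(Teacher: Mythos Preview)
Your argument is correct and is precisely the approach the paper takes: the paper's proof consists of the single sentence that the result follows by integrating the second vortex equation~\eqref{eq:ve2}, which is exactly what you carry out in detail.
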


\begin{proof}
This result is easily obtained by integrating the second vortex equation~\eqref{eq:ve2}.
\end{proof}

\subsection{Spaces of solutions}

If~$\tau_0>1$, then Proposition~\ref{prop:length} shows that there can be no solutions to the vortex equations, so that the solution space~$\calV(X,L)$ is empty.

If~$\tau_0=1$, then Proposition~\ref{prop:length} shows that~$\phi=0$ is necessary for a solution. 
Proposition~\ref{prop:harmonic} implies for solutions with~$\phi=0$ that the second vortex equation~\eqref{eq:ve2} is satisfied if and only if the curvature form of~$A$ is harmonic. Thus, in this case, the moduli space is a torus of dimension~$2g(X)$ by the results described in Section~\ref{sec:Picard_torus}.

If~$\tau_0<1$, then every solution has~$\phi\not=0$. In~\cite{Bradlow}, Bradlow has shown
that for every section~$\phi$ there is a unique unitary connection~$A$ satisfying the second
vortex equation. Consequently, the moduli space is identified with the space of
effective divisors of degree~$d=\deg(L)$, i.e. with the~$d$-fold
symmetric power of~$X$. This space maps to the Picard torus with projective spaces
as fibres.

The following result is well-known.
  
\begin{proposition}
On each Riemann surface~$X$ there are only finitely many isomorphism classes of line bundles~$L$ such that the vortex equations on~$(X,L)$ have a solution.
\end{proposition}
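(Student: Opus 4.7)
The plan is to exploit that the set of isomorphism classes of Hermitian line bundles on $X$ is parametrised by the single integer $\deg(L)$, as recalled at the start of Section~\ref{sec:dimension1}. It therefore suffices to show that the degrees of those line bundles admitting a vortex solution lie in some bounded interval.

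First I would obtain an upper bound on $\deg(L)$ directly from Proposition~\ref{prop:length}: since $\|\phi\|^2\ge0$, the identity $\|\phi\|^2=(1-\tau_0)\vol(X)$ forces $\tau_0\le1$, that is
\[
\deg(L)\le\frac{\vol(X)}{4\pi}.
\]

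The lower bound is the substantive step, and this is where I expect the only real work to lie. If $\phi\equiv0$, then the second vortex equation~\eqref{eq:ve2} reduces to $\upF_A=-\cci\omega/2$, and integrating gives $\deg(L)=\vol(X)/(4\pi)>0$, so $\phi\equiv0$ is compatible only with the maximal degree. For any other solution one has $\phi\not\equiv0$, and by Remark~\ref{rem:holomorphic} the first vortex equation~\eqref{eq:ve1} exhibits $\phi$ as a non-zero holomorphic section of $L$ with respect to the holomorphic structure defined by $\pbar_A$. The zero divisor of such a section is effective of degree $\deg(L)$, which forces $\deg(L)\ge0$.

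Combining the two bounds, solutions can only exist when $\deg(L)$ is an integer in $[0,\vol(X)/(4\pi)]$, an interval containing only finitely many integers. Since isomorphism classes of Hermitian line bundles are classified by degree, this yields the desired finiteness. The only slightly delicate point is excluding negative degrees, and I would phrase it exactly as above via holomorphicity of $\phi$; no analytic estimates beyond Proposition~\ref{prop:length} are needed.
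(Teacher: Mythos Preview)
Your proof is correct and follows essentially the same route as the paper's: an upper bound on $\deg(L)$ from Proposition~\ref{prop:length}, and a lower bound from the fact that a non-zero solution~$\phi$ is a holomorphic section, forcing~$\deg(L)\ge0$. Your write-up is in fact slightly more careful than the paper's, since you explicitly dispose of the borderline case~$\phi\equiv0$.
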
 

\begin{proof}
On the one hand, the degree~$\deg(L)$ of~$L$ has to be large enough such that the bundle has a non-zero holomorphic section. On the other hand, Proposition~\ref{prop:length} implies that there are no solutions in case the degree~$\deg(L)$ of~$L$ is larger than~$\vol(X)/4\pi$.
\end{proof}


\section{The vortex map}\label{sec:map}

As in Section~\ref{sec:dimension1}, let~$(X,L)$ be a pair consisting of a Riemann surface~$X$ together with a Hermitian line bundle~$L$. 

\subsection{The based vortex map}

We will use a unitary base connection~$B$ on~$L$ with harmonic curvature~$\upF_B$ to parametrize the space of unitary connections by the space~$\Omega^1_X$ of real~$1$-forms. The harmonic assumption is inessential; it is only used to write~$\tau_0/2$ instead of~$\cci\star\upF_B$ sometimes, see Section~\ref{sec:harmonic_used}.

\begin{definition}
The {\it vortex map for~$(X,L)$ based at~$B$} is the~$\calG(X)$-equivariant map
\begin{equation}
  v_B=v_B(X,L)\colon
   \Omega^0_X(L)\oplus\Omega^1_X
   \longrightarrow
   \Omega^{01}_X(L)\oplus\Omega^2_X\oplus\mathcal{H}^1_X\oplus\overline{\Omega}^0_X
\end{equation}
which sends~$(\phi,\alpha)$ to
\[
  (\pbar_{B+\cci \alpha}\phi,\cci\upF_{B+\cci \alpha}-\frac12(1-|\phi|^2)\omega,\h\alpha,[\d^*\alpha]),
\]
where~$\h\alpha$ is the image of~$\alpha$ under the projection to the space~$\mathcal{H}^1_X$ of harmonic~$1$-forms, and~$[\d^*\alpha]$ is the class of~$\d^*\alpha$ modulo constants in~$\overline{\Omega}^0_X=\Omega^0_X/\RR$. More generally, the~{\it~$\tau$-vortex map~$v_B^\tau(X,L)$ for~$(X,L)$ based at~$B$} is defined by building in the Taubes parameter~$\tau$ according to vortex equation~\eqref{eq:ve2tau}.
\end{definition}

Implicitly, will shall pass to~$L^2_4$- and~$L^2_3$-completions in the source and target, respectively. Then the vortex map for the pair~$(X,L)$ based at~$B$ is a continuous map between Hilbert spaces. 

\subsection{Variation of the base connection}

In order to eliminate the dependence on the choice of the unitary connection~$B$ with harmonic curvature form, we may consider the~$\calG(X)$-equivariant map
\[
   \calA_\h(L)\times\Big[\Omega^0_X(L)\oplus\Omega^1_X\Big]
   \longrightarrow
   \calA_\h(L)\times\Big[\Omega^{01}_X(L)\oplus\Omega^2_X\oplus\mathcal{H}^1_X\oplus\overline{\Omega}^0_X\Big]
\]
over~$\calA_\h(L)$, which sends~$(B,(\phi,\alpha))$ to~$(B,v_B(\phi,\alpha))$. 

The group~$\calG(X)$ is not a compact Lie group. It does not even have the homotopy type of a finite CW complex, unless~$X$ has vanishing genus. Since it is technically easier to work with compact Lie groups, we shall handle the information encoded in the~$\calG(X)$-equivariance of the vortex map following Bauer~\cite{Bauer:survey} again: We shall keep the action of the compact subgroup~$\TT$ of~$\calG(X)$, so that everything is sight is still~$\TT$-equivariant. And, we incorporate the rest of the action by working parametrized over a classifying space for a complement of~$\TT$, for which a gauge theoretical model has been described in Section~\ref{sec:Picard_torus}. Note that~$\TT$ acts trivially on connections, but the residual action of~$\calG(X)/\TT$ is free.

After~$L^2$-completions, this leads to a~$\TT$-equivariant map 
\begin{equation}\label{eq:vortex_map}
   \calA_\h(L)\underset{\calG(X)}{\times}\Big[\Omega^0_X(L)\oplus\Omega^1_X\Big]
   \longrightarrow
   \calA_\h(L)\underset{\calG(X)}{\times}\Big[\Omega^{01}_X(L)\oplus\Omega^2_X\oplus\mathcal{H}^1_X\oplus\overline{\Omega}^0_X\Big]
\end{equation}
of Hilbert bundles over the Picard torus~$\calA_\h(L)/\calG(X)=\Pic(X,L)$.

\begin{definition}
The map~\eqref{eq:vortex_map}, which is~$\TT$-equivariant and parametrized over the Picard torus~$\Pic(X,L)$, is called the {\it vortex map} of~$(X,L)$, and it will be denoted by~\hbox{$v=v(X,L)$}.
\end{definition}


\section{Analytic estimates}\label{sec:estimates}

The purpose of this section is to provide for the necessary estimates to show that the vortex map can be used to define a stable homotopy class.

\begin{theorem}\label{thm:estimates}
The vortex map is a continuous map between Hilbert spaces. Preimages of bounded sets are bounded, and the map can be written as a sum of a linear Fredholm operator and of a compact map.
\end{theorem}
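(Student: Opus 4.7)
The plan is to decompose the vortex map $v_B$ as the sum of a linear Fredholm operator, a constant, and a nonlinear part that is compact between the indicated Sobolev completions, and then to establish the boundedness of preimages by a Weitzenb\"ock-based pointwise bound on $\phi$ followed by an elliptic bootstrap for $\alpha$.

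First I would expand each of the four components of $v_B$ around the origin. Using $\upF_{B+\cci\alpha}=\upF_B+\cci\,\d\alpha$ and $\pbar_{B+\cci\alpha}\phi=\pbar_B\phi+\cci\alpha^{0,1}\phi$, the vortex map separates as $v_B=L+Q+c$, where
\[
L(\phi,\alpha)=(\pbar_B\phi,\,-\d\alpha,\,\h\alpha,\,[\d^*\alpha]),\quad Q(\phi,\alpha)=(\cci\alpha^{0,1}\phi,\,\tfrac12|\phi|^2\omega,\,0,\,0),
\]
and $c=(0,\cci\upF_B-\tfrac12\omega,0,0)$ is a constant. The operator $L$ is Fredholm as an $L^2_4\to L^2_3$ map: its first component is the elliptic Cauchy--Riemann operator $\pbar_B$ on sections of $L$, and its restriction to $\Omega^1_X$ is essentially $\d\oplus\d^*$ augmented by the projection to harmonic forms, which is Fredholm by Hodge theory on the closed surface~$X$. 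For the compactness of $Q$, Sobolev multiplication in real dimension~$2$ makes $L^2_4\times L^2_4\to L^2_4$ continuous, so $Q$ sends bounded subsets of the source into bounded subsets of the $L^2_4$-target; composing with the compact Rellich embedding $L^2_4\hookrightarrow L^2_3$ concludes this point, and the constant $c$ is smooth hence irrelevant.

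The main analytic task is the boundedness of preimages. Suppose $v_B(\phi,\alpha)=(\eta,f,h_0,k_0)$ lies in a bounded set of the $L^2_3$-target. Step one is a pointwise bound on $\phi$. Applying the Weitzenb\"ock formula of Proposition~\ref{prop:Weitzen} to $A=B+\cci\alpha$ and substituting the second vortex equation in the form $\cci\star\upF_A=\tfrac12(1-|\phi|^2)+\star f$, a standard Kato-type computation of $\Delta|\phi|^2$ yields
\[
\Delta|\phi|^2+|\phi|^2(|\phi|^2-1)\leq 4|\eta|^2+2(\star f)|\phi|^2.
\]
The Sobolev embedding $L^2_3\hookrightarrow L^\infty$, valid in dimension~$2$, bounds $\eta$ and $\star f$ pointwise, and evaluating this differential inequality at a maximum of $|\phi|^2$ produces an $L^\infty$ bound on $\phi$ depending only on the bounds on $(\eta,f)$. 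Step two bounds $\alpha$ in $L^2_1$: once $|\phi|$ is bounded, the second vortex equation controls $\d\alpha$ in every $L^p$, which together with the bounds on $\h\alpha$ and $[\d^*\alpha]$ and the elliptic estimate for the Fredholm operator $\d\oplus\d^*\oplus\h$ does the job. Step three is a standard elliptic bootstrap, alternately using $\pbar_B\phi=\eta-\cci\alpha^{0,1}\phi$ and the second vortex equation to upgrade $\phi$ and $\alpha$ each by one Sobolev derivative at a time, finally reaching $L^2_4$.

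The hard part will be Step one: the classical clean bound $|\phi|\leq1$ for honest vortex solutions must be replaced by a quantitative estimate that absorbs the non-vanishing data $\eta$ and $f$, and the argument must be arranged so that the pointwise maximum-principle step closes, which is what forces the reliance on the Sobolev embedding into $L^\infty$ at exactly the right level of regularity.
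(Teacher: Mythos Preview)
Your decomposition $v_B=L+Q+c$, the Fredholmness of $L$, and the compactness of $Q$ via Sobolev multiplication followed by Rellich are correct and agree with the paper; the bootstrap in Step three is also essentially what the paper does. The gap is in Step one.

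The displayed inequality
\[
\Delta|\phi|^2+|\phi|^2(|\phi|^2-1)\le 4|\eta|^2+2(\star f)|\phi|^2
\]
is not what the Weitzenb\"ock computation produces. Starting from $\Delta|\phi|^2\le 2\langle\d_A^*\d_A^{\phantom*}\phi,\phi\rangle$ and inserting $\d_A^*\d_A^{\phantom*}=2\pbarstar_A\pbar_A+\cci\star\upF_A$, the term that appears is $4\langle\pbarstar_A\eta,\phi\rangle$, not $4|\eta|^2$; there is no pointwise integration by parts available. Since $\pbarstar_A=\pbarstar_B+((\cci\alpha)^{0,1})^*$, this term contributes a summand of order $\|\alpha\|_\infty\|\eta\|_\infty|\phi|$, so the maximum-principle argument does \emph{not} bound $\|\phi\|_\infty$ in terms of $(\eta,f)$ alone: the estimate is coupled to $\|\alpha\|_\infty$. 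Your Step two, which controls $\alpha$ only \emph{after} $\phi$ is bounded, therefore cannot be fed by Step one as written.

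The paper closes this loop by treating the two estimates as a coupled system. From the Weitzenb\"ock inequality one gets, at a maximum of $|\phi|$, an inequality of the schematic form
\[
\|\phi\|_\infty^4\lesssim (1+\|\alpha\|_\infty)\,\|\eta\|_{2,3}\,\|\phi\|_\infty+(1+\|f\|_{2,3})\,\|\phi\|_\infty^2,
\]
while the elliptic estimate for $\d\oplus\d^*\oplus\h$ together with the second equation gives $\|\alpha\|_\infty\lesssim\|\phi\|_\infty^2+C$. Substituting the second into the first yields a polynomial inequality in $\|\phi\|_\infty$ whose top term on the right is cubic, dominated by the quartic on the left; this bounds $\|\phi\|_\infty$, then $\|\alpha\|_\infty$, and then your bootstrap goes through. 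The missing idea is precisely this coupling: Steps one and two are not sequential but have to be solved together.
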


The proof of this result takes the rest of this section.

\subsection{An {\it a priori} bound} 

Given a real~$1$-form~$\alpha$ let~$A$ be the
connection~$B+\cci\alpha$. Since~$A$ is also unitary,
\[
  \Delta|\phi|^2 
  =\d^*\d\langle\phi,\phi\rangle
  = 2\langle\d^*_A\d^{\phantom*}_A\phi,\phi\rangle 
  - 2\langle\d_A\phi,\d_A\phi\rangle
  \le 2\langle\d^*_A\d^{\phantom*}_A\phi,\phi\rangle.
\]
Inserting the Weitzenb\"ock formula~(Proposition~\ref{prop:Weitzen}), this
leads to the pointwise estimate
\begin{equation}\label{eq:pointwise_estimate}
  \Delta|\phi|^2 
  \le 
  4\langle\pbarstar_A\pbar_A\phi,\phi\rangle + 
  2\langle\cci\star \upF_A\phi,\phi\rangle.
\end{equation}

Consequently, if~$\phi$ and~$A$ solve the~$\tau$-vortex equations~(\ref{eq:ve1})
and~(\ref{eq:ve2tau}), it follows that
\[
   \Delta|\phi|^2\le(\tau-|\phi|^2)|\phi|^2.
\]
If~$\phi$ is non-zero, the maximum of~$|\phi|^2$ is non-zero, namely
$\|\phi\|^2_\infty$, and at the maximum one has~$\Delta|\phi|^2\ge
0$. It follows that~$\|\phi\|^2_\infty\le\tau$ so that the~$\phi$ are
uniformly bounded.

More generally, assume that there are~$L^2_3$-bounds on
\begin{equation}
  \label{eq:notation}    
  \psi = \pbar_A\phi 
  \qquad\mathrm{and}\qquad
  b = \cci\star \upF_A-\frac12(\tau-|\phi|^2).
\end{equation}
The aim is to show that there are~$L^2_4$-bounds on~$\phi$ and~$\alpha$.

\subsection{Uniformly bounding the section~\texorpdfstring{$\phi$}{phi}} 

One may insert the new notation~(\ref{eq:notation}) into the pointwise
estimate~(\ref{eq:pointwise_estimate}). Using
\[
  \pbarstar_A\psi=(\pbar_{B+\cci\alpha})^*\psi = \pbarstar_B\psi + ((\cci\alpha)^{0,1})^*\psi,
\]
and the notation~$x\lesssim y$ if~$x\leq Ky$ for some constant~$K$ which is independent of~$x$ and~$y$, but may otherwise vary from inequality to inequality, one obtains
\begin{eqnarray*}
  \Delta|\phi|^2 
  &\lesssim& 4|\pbarstar_A\psi||\phi| +
  (\tau+2b)|\phi|^2 - |\phi|^4\\
  &\lesssim& 
  \left(\|\pbarstar_B\psi\|_\infty+\|\alpha\|_\infty\|\psi\|_\infty\right)|\phi| 
  + \left(1+\|b\|_\infty\right)|\phi|^2 - |\phi|^4.
\end{eqnarray*}
The Sobolev embedding of~$L^2_2$ into~$C^0$
gives
\[
  \|\pbarstar_B\psi\|_\infty \lesssim
  \|\pbarstar_B\psi\|_{2,2}\lesssim\|\psi\|_{2,3},
\]
and therefore
\begin{equation}
  \label{eq:estimate_for_section}
  \Delta|\phi|^2 
  \lesssim 
  \left(1+\|\alpha\|_\infty\right)\|\psi\|_{2,3}|\phi| 
  + \left(1+\|b\|_{2,3}\right)|\phi|^2 - |\phi|^4.
\end{equation}

\subsection{Uniformly bounding the form~\texorpdfstring{$\alpha$}{alpha}}\label{sec:harmonic_used}

If~$p>2$, then there is the Sobolev embedding of~$L^p_1$ into~$C^0$. Using the
elliptic inequality, and~$\d^*\alpha=0$, one finds
\[
  \|\alpha\|_\infty 
  \lesssim \|\alpha\|_{p,1} 
  \lesssim \|\d\alpha\|_p.
\]
One has~$\cci\star\upF_A= \cci\star\upF_B-\star\d\alpha$. In the case when the curvature~$\upF_B$ of the base connection~$B$ is harmonic, then we also have~$\cci\star \upF_B=\tau_0/2$. This gives
\[
  \star\d\alpha = \frac12|\phi|^2-\frac12(\tau-\tau_0)-b
\]
and therefore
\begin{eqnarray*}
  \|\d\alpha\|_p 
  &\lesssim&\||\phi|^2\|_p+\|\tau-\tau_0\|_p+\|b\|_p\\
  &\lesssim&\|\phi\|^2_\infty+\|\tau-\tau_0\|_p+\|b\|_{2,3}.
\end{eqnarray*}
The preceding two inequalities lead to the estimate
\begin{equation}
  \label{eq:estimate_for_form}
  \|\alpha\|_\infty \lesssim \|\phi\|^2_\infty+\|\tau-\tau_0\|_p+\|b\|_{2,3}.
\end{equation}

\subsection{Uniformly bounding both the section \texorpdfstring{$\phi$}{phi} and the form \texorpdfstring{$\alpha$}{alpha}} 

Inserting the previous inequality into~(\ref{eq:estimate_for_section})
and evaluating at a point where~$\phi$ is maximal, one obtains a
uniform bound for~$\phi$.  Then~(\ref{eq:estimate_for_form}) gives a
uniform bound for~$\alpha$.

\subsection{Bootstrapping} 

One may now use the uniform bounds on~$\phi$ and~$\alpha$, the
assumed~$L^2_3$-bounds on~$\psi$ and~$b$, and the vortex equations to show
that there are in fact~$L^2_4$ bounds on~$\phi$ and~$\alpha$.  

First use the H\"older multiplication from~$L^{2p}\times L^{2p}$ to
$L^p$ for some~$p>2$ to get~$L^p_1$-bounds as follows. By definition,
\[
  \|\phi,\alpha\|^p_{p,1}=\|\phi,\alpha\|^p_p+  \|\pbar_B\phi,\d\alpha\|^p_p.
\]
The first term on the right hand side is bounded by the uniform
bounds. For the other one, the notation~(\ref{eq:notation}) gives
\begin{eqnarray*}
  \|\phi,\alpha\|^p_{p,1} 
  & = &
  \|\phi,\alpha\|^p_p + 
  \|\psi-\cci\alpha^{0,1}\phi,
  \frac12|\phi|^2-\frac12(\tau-\tau_0)-b\|^p_p\\
  &\le &
  \|\phi,\alpha\|^p_p + 
  \|\psi,b\|^p_p + 
  \|\alpha^{0,1}\phi\|^p_p+\frac12\||\phi|^2\|^p_p+\frac12\|\tau-\tau_0\|^p_p.
\end{eqnarray*}
Now the second term is bounded by assumption and the last term is
constant. For the term in between use the above-mentioned
multiplication to get
\[
  \|\alpha^{0,1}\phi\|^p_p+\frac12\||\phi|^2\|^p_p
  \lesssim \|\alpha\|^p_{2p}\|\phi\|^p_{2p}+\frac12\|\phi\|^{2p}_{2p}.
\] 
Thus indeed the~$L^{2p}$-bounds on~$\phi$ and~$\alpha$ give~$L^p_1$-bounds
on them.

Similarly use the Sobolev multiplication on~$L^p_1$ for~$p>2$, which yields 
\[
  \|\alpha^{0,1}\phi\|^p_{p,1}+\frac12\||\phi|^2\|^p_{p,1}
  \lesssim \|\alpha\|^p_{p,1}\|\phi\|^p_{p,1}+\frac12\|\phi\|^{2p}_{p,1},
\] 
to get~$L^p_2$-bounds. The embedding of~$L^p_2$ into~$L^2_2$ then
gives~$L^2_2$ bounds. And using the Sobolev multiplication on~$L^2_k$
for~$k>1$ one gets~$L^2_3$- and finally~$L^2_4$-bounds as desired.


\section{Stable cohomotopy invariants}\label{sec:invariants}

We have already defined the vortex map for a pair~$(X,L)$ in Section~\ref{sec:map} and studied some of its analytic properties in Section~\ref{sec:estimates}. Now we will distill this information into a stable homotopy class.


\subsection{The Bauer-Furuta construction}\label{sec:BauerFuruta}

The Bauer-Furuta construction, see~\cite[Section~2]{BauerFuruta} and~\cite[Section~2]{Bauer:survey}, which generalizes earlier work of \v{S}varc, see~\cite{Svarc}, allows us to pass from certain non-linear perturbations of linear Fredholm operators to stable homotopy classes of maps. For later reference, we state their result in the following form.

\begin{theorem}\label{thm:Bauer}
A continuous map between Hilbert spaces with the property that preimages of bounded sets are bounded, and that can be written as a sum of a linear Fredholm operator~$\ell$ and of a compact map, defines a stable homotopy class~$\rmS^\ell\to\rmS^0$ between (finite-dimensional) spheres. (If the index of~$\ell$ is negative, then the one-point-compactification~$\rmS^\ell$ has to be interpreted as a spectrum.) More generally, an equivariant family of such maps between~$G$-Hilbert space bundles over finite CW complexes~$B$ defines a class that lives in
\begin{equation}\label{eq:group}
[\rmS^\ell_B,\rmS^0_B]^G_B
\end{equation}
the~$0$-th~$G$-equivariant parametrized stable cohomotopy over~$B$.
\end{theorem}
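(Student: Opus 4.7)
The plan is to produce the stable class by finite-dimensional approximation of $f = \ell + c$. Fix an increasing filtration $V_1 \subset V_2 \subset \cdots$ of $H_1$ by finite-dimensional subspaces whose union is dense, arranged so that every $V_n$ already contains a fixed (finite-dimensional) complement of the closed image of $\ell$. Setting $W_n = \ell^{-1}(V_n) \subset H_0$ then yields a finite-dimensional subspace containing $\ker(\ell)$, with $\dim W_n - \dim V_n = \mathrm{ind}(\ell)$, and the restriction $\ell|_{W_n}\colon W_n \to V_n$ is surjective of the correct relative dimension. Let $p_n \colon H_1 \to V_n$ denote the orthogonal projection; the candidate representative is the finite-dimensional map
\[
f_n = p_n \circ f|_{W_n} = \ell|_{W_n} + p_n c|_{W_n} \colon W_n \longrightarrow V_n.
\]

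Next I would use the two analytic hypotheses to show that $f_n$ is nowhere zero on a large sphere in $W_n$ for all sufficiently large $n$. The bounded-preimage assumption supplies an $R > 0$ with $\|f(x)\| \ge 1$ whenever $\|x\| \ge R$. Compactness of $c$ makes the image $c(\bar B(0,R))$ precompact, so for $n$ large enough $\|(1-p_n) c(x)\| < \tfrac12$ uniformly on $W_n \cap \bar B(0,R)$; together with the previous bound this forces $\|f_n(x)\| \ge \tfrac12$ on the sphere $\{x \in W_n : \|x\| = R\}$. One-point compactification, preceded by a radial rescaling outside a slightly larger ball, then produces a pointed map $W_n^+ \to V_n^+$. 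This is the finite-dimensional model of the stable class: after formal desuspension by $V_n$, the source represents $S^\ell$ because the difference $\dim W_n - \dim V_n$ equals $\mathrm{ind}(\ell)$.

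Independence of all choices would be checked by the standard argument that enlarging $V_n$ to $V_{n+1}$ amounts to smashing with the one-point compactification of a common linear complement on both sides, and that a straight-line homotopy interpolates between the two approximations while staying nonvanishing on the appropriate sphere (the same precompactness estimate, applied on a slightly larger ball, gives this). The parametrized, $G$-equivariant generalization over a finite $G$-CW complex $B$ is carried out fiberwise with the same ingredients, but using that one may choose the $V_n$ to be $G$-equivariant finite-dimensional subbundles of the target Hilbert bundle; such $V_n$ are produced by averaging local trivializations over the compact group $G$ and extending cell by cell over $B$. The main obstacle here, and the main reason the finite-CW hypothesis on $B$ and the compactness of $G$ are essential, is choosing a single $n$ that works uniformly over all of $B$ and patching the local approximations into a global equivariant fiberwise map of sphere bundles. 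The resulting class in the parametrized equivariant cohomotopy group \eqref{eq:group} is then shown to be independent of the choices by one further round of the same homotopy argument, now performed in families.
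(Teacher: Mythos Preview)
The paper does not supply its own proof of this theorem: it is stated as a result quoted from the literature (\cite{BauerFuruta}, \cite{Bauer:survey}, \cite{Svarc}), so there is no in-paper argument to compare against. Your sketch is a faithful outline of the finite-dimensional approximation argument carried out in those references, and it is essentially correct as a proof plan.

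One small technical remark: as written, your precompactness estimate only controls $(1-p_n)c$ on the closed ball $\bar B(0,R)$, which gives nonvanishing of $f_n$ on the sphere of radius~$R$ but not automatically for all $\|x\|>R$. Your phrase ``radial rescaling outside a slightly larger ball'' is presumably meant to handle this, and it does: once $f_n$ is nonzero on $\partial B(0,R)\cap W_n$, one may redefine $f_n$ outside that sphere by radial extension (or, equivalently, pass to the map of pairs $(D(R)\cap W_n,\partial D(R)\cap W_n)\to(V_n,V_n\setminus B(0,\tfrac12))$ and collapse) to obtain the pointed map $W_n^+\to V_n^+$. With that clarification your outline matches the construction in the cited sources.
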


Often, the group \eqref{eq:group} can also be interpreted in a non-parametrized way, by passing to the Thom spectrum of the index bundle. This will be explained later.


\subsection{The linearisation of the vortex map}\label{sec:index}

The Taylor expansion at~$(0,0)$ of the vortex map~$v_B=v_B(X,L)$ for~$(X,L)$ based at~$B$ is
\begin{eqnarray*}
v_B(\phi,\alpha)&=&(0,\cci\upF_B-\frac12\omega,0,0)\\
&+&(\pbar_{B}\phi,-\d\alpha,\h\alpha,[\d^*\alpha])\\
&+&(\cci\cdot\d\alpha\cdot\phi,\frac12|\phi|^2\omega,0,0).
\end{eqnarray*}
Let us work out the index of the linearization
\[
(\alpha,\phi)\longmapsto(\pbar_{B}\phi,-\d\alpha,\h\alpha,[\d^*\alpha]),
\]
which can be decomposed as a real part and a complex linear part.

On the one hand, the real operator 
\[
\alpha\longmapsto(-\d\alpha,\h\alpha,[\d^*\alpha]),
\]
is elliptic with real index~$-1$. In fact, this linear map is precisely the restriction~$v_B(X,L)^\TT$ of the vortex map~$v_B(X,L)$ to the~$\TT$-fixed point set, and we will shall pause to discuss this in some detail now.

\begin{proposition}
The fixed point map~$v_B(X,L)^\TT$ is injective but not surjective.
\end{proposition}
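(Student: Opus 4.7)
The plan is first to pin down the fixed point sets. Since $\TT\subseteq\calG(X)$ is the subgroup of constant gauge transformations, it acts by scalar multiplication on sections of $L$ and on $L$-valued $(0,1)$-forms, and trivially on everything not involving $L$. The source fixed set is therefore $\{0\}\oplus\Omega^1_X$, and the target fixed set is $\{0\}\oplus\Omega^2_X\oplus\mathcal{H}^1_X\oplus\overline{\Omega}^0_X$. Inserting $\phi=0$ into the definition of $v_B$, and using the relation $\upF_{B+\cci\alpha}=\upF_B+\cci\,\d\alpha$ from Section~\ref{sec:dimension1}, the restricted map reduces to the affine-linear assignment
\[
\alpha\longmapsto\bigl(\cci\upF_B-\d\alpha-\tfrac12\omega,\;\h\alpha,\;[\d^*\alpha]\bigr).
\]

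For injectivity, I would take two $1$-forms $\alpha_1,\alpha_2$ with the same image and set $\beta=\alpha_1-\alpha_2$. Reading off the three components gives $\d\beta=0$, $\h\beta=0$, and that $\d^*\beta$ is a constant function. But integration by parts on the closed surface $X$ forces $\int_X\d^*\beta=0$, so this constant vanishes and $\d^*\beta=0$. Thus $\beta$ is closed and co-closed, hence harmonic, and since its harmonic projection is zero, Hodge theory yields $\beta=0$.

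For non-surjectivity, the key observation is that Stokes' theorem pins down the integral of the first component: every element in the image satisfies
\[
\int_X\bigl(\cci\upF_B-\d\alpha-\tfrac12\omega\bigr)=2\pi\deg(L)-\tfrac12\vol(X),
\]
because $\int_X\d\alpha=0$ and $\int_X(\cci/2\pi)\upF_B=\deg(L)$. Any $2$-form whose integral over $X$ differs from this fixed scalar therefore fails to be in the image; a generic multiple of $\omega$ will do.

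I do not anticipate a genuine obstacle here: the content is elementary Hodge theory plus Stokes' theorem. The only care needed is correctly identifying the $\TT$-fixed summands of source and target and keeping track of the constant offset coming from $\cci\upF_B-\tfrac12\omega$, so that surjectivity is seen to fail in the $\Omega^2_X$-direction while the $\mathcal{H}^1_X$- and $\overline{\Omega}^0_X$-components are already hit surjectively by harmonic and by co-closed representatives, respectively.
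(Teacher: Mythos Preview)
Your argument is correct. The injectivity half is essentially identical to the paper's: both reduce to showing that a $1$-form $\beta$ with $\d\beta=0$, $\d^*\beta=0$, and $\h\beta=0$ must vanish, and you are slightly more explicit than the paper in noting why $[\d^*\beta]=0$ forces $\d^*\beta=0$ (zero integral).

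For non-surjectivity the two arguments differ. The paper simply invokes the Fredholm index: it has already recorded that the linear operator $\alpha\mapsto(-\d\alpha,\h\alpha,[\d^*\alpha])$ has real index $-1$, so injectivity immediately precludes surjectivity, and the cokernel is identified with the $1$-dimensional space of harmonic $2$-forms. Your route is more direct and self-contained: you observe via Stokes that the $\Omega^2_X$-component of the image always integrates to the fixed number $2\pi\deg(L)-\tfrac12\vol(X)$, hence any $2$-form with a different integral is missed. These are two faces of the same coin---your integral constraint is exactly the pairing with the harmonic $2$-form $\omega$---but your version avoids appealing to the index computation, at the cost of a short explicit calculation. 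You are also more careful than the paper in keeping the constant offset $\cci\upF_B-\tfrac12\omega$ visible; the paper silently identifies the affine fixed-point map with its linear part, which is harmless for the injectivity/surjectivity question but is worth being aware of.
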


\begin{proof}
If~$(0,\alpha)$ is a fixed point, then it is sent to~$(0,-\d\alpha,\h\alpha,[\d^*\alpha])$. If that image is zero, then we have~$\d\alpha=0$ and~$\d^*\alpha=0$, so that~$\alpha$ is harmonic. Therefore~$\alpha=\h\alpha=0$. This shows that~$v_B^\TT$ is injective. Since the index is negative, it cannot be surjective: the image is the orthogonal complement of the~(1-dimensional) space of harmonic~$2$-forms. 
\end{proof}

If we vary the pair~$(X,L)$ in a family, then the index of the real part of the linearisation of the vortex map is the negative of a line bundle over the base of the family. This is a real line bundle, the so-called {\it Hodge bundle}~$\beta=\beta^2(X)$ of harmonic~$2$-forms in the fibres. It only depends on the variation of~$X$, of course. This bundle is flat, hence trivial as soon as the fundamental group of the base vanishes, but also in many other cases. The universal example lives over~$\BDiff(X)$, where the fundamental group is the mapping class group of the surface~$X$. Because of our assumption that~$\Diff(X)$ is the group of orientation preserving diffeomorphisms, the Hodge bundle for the families considered here will always be trivial, and a nowhere-zero section is given by the family of volume forms.

On the other hand, the complex operator
\[
  \pbar_B:\Omega^0_X(L)\longrightarrow\Omega^{01}_X(L) 
\]
is also elliptic, and its complex index is given by the well-known Riemann-Roch formula as~\hbox{$\deg(L)+1-g(X)$}. 

As the class of~$B$ varies over the Picard torus, the Chern classes of the index bundle, the {\it Riemann-Roch bundle~$\rho=\rho(X,L)$}, can be computed using the Grothendieck-Riemann-Roch theorem for the push-forward of the Poincar\'e line bundle on~$X\times\Pic(X,L)$, see~\cite{ACGH} for example. The result is
\begin{equation}
\c_k(\rho)=(-1)^k\frac\theta{k!},
\end{equation}
where~$\theta$ is the class of the theta divisor in~$\upH^2(\Pic(X,L);\ZZ)$. For the purposes of this text, it will be sufficient to know that the~$\theta$-divisor is a translate of the image of the product
\[
\Sym^{g(X)-1}(X)\subseteq\Pic(X),
\]
where more generally the~$n$-th symmetric product~$\Sym^n(X)$ maps to the component~$\Pic^n(X)\subseteq\Pic(X)$ of line bundles of degree~$n$ via
\[
[p_1,\dots,p_n]\longmapsto\calO(p_1)\otimes\dots\otimes\calO(p_n).
\]
In particular, for~$g(X)=0$, the~$\theta$-divisor is empty, and for~$g(X)=1$, it is a point.


\subsection{Stable homotopy theory invariants}

The general construction reviewed in Section~\ref{sec:BauerFuruta} above, together with the specific properties of the vortex map proven in Section~\ref{sec:estimates}, lead to the first main result of this paper.

\begin{theorem}\label{thm:vortex_define_stable}
  For each pair~$(X,L)$, consisting of a Riemann surface~$X$ and a Hermitian line bundle~$L$, the vortex map defines a stable homotopy class
\[
v(X,L)\in[\rmS^{\rho}_{\Pic(X,L)},\rmS^1_{\Pic(X,L)}]^\TT_{\Pic(X,L)}
\]
in the group of~$\TT$-equivariant stable homotopy classes of maps parametrized by the Picard torus~$\Pic(X,L)$.
\end{theorem}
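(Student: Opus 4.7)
The strategy is simply to feed the vortex map $v(X,L)$ of Section~\ref{sec:map} into the machine provided by Theorem~\ref{thm:Bauer}, and to identify the resulting index data with the Riemann-Roch bundle $\rho$ and the trivial real line. There are essentially three things to check: (i) the analytic hypotheses of Theorem~\ref{thm:Bauer}; (ii) the identification of the linear part and its index bundle over $\Pic(X,L)$; (iii) the compatibility with the $\TT$-action.

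For (i), the content of Theorem~\ref{thm:estimates} is exactly what is needed: after passing to the $L^2_4/L^2_3$ completions, the vortex map is a continuous map of Hilbert bundles, preimages of bounded sets are bounded, and it decomposes as a linear Fredholm operator plus a compact nonlinearity. These properties are preserved by the quotient construction \eqref{eq:vortex_map}, since the $\calG(X)$-action on the base Hilbert bundles is by bounded linear operators varying continuously with $B$, and the nonlinear parts $\cci\,\d\alpha\cdot\phi$ and $\tfrac12|\phi|^2\omega$ stay compact fibrewise.

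For (ii), by the Taylor expansion in Section~\ref{sec:index} the fibrewise linearization at the origin splits as a direct sum of two elliptic operators. The real summand $\alpha\mapsto(-\d\alpha,\h\alpha,[\d^*\alpha])$ carries the trivial $\TT$-action; as the connection $B$ varies over $\Pic(X,L)$ this summand is constant and its index bundle is $-\beta^2(X)$. By the discussion after the proposition in Section~\ref{sec:index}, the Hodge bundle $\beta^2(X)$ is canonically trivialized by the family of volume forms, so this piece contributes $-1$ to the index, i.e.\ a factor $\rmS^{-1}$ on the source, which one moves across the arrow to obtain the $\rmS^1$ on the target. The complex summand $\pbar_B$ depends on $B$, and as $[B]$ varies over the Picard torus its index bundle is, by definition, the Riemann-Roch bundle $\rho(X,L)$. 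Putting the two pieces together, the virtual index bundle $\ell$ of Theorem~\ref{thm:Bauer} equals $\rho-1$ over $\Pic(X,L)$.

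For (iii), $\TT\subseteq\calG(X)$ acts trivially on connections and forms, and via the standard representation on sections of $L$. Hence it acts trivially on the real summand and with the standard weight-one action on the complex summand, so $\rho$ is naturally a $\TT$-equivariant virtual bundle and the resulting Bauer-Furuta class lies in
\[
[\rmS^{\rho}_{\Pic(X,L)},\rmS^1_{\Pic(X,L)}]^\TT_{\Pic(X,L)},
\]
as claimed. The main subtlety I expect is bookkeeping in (ii): making sure that the identification of the index bundle of the family $\{\pbar_B\}_{[B]\in\Pic(X,L)}$ with the Grothendieck-Riemann-Roch push-forward of the Poincar\'e line bundle is compatible with the $\TT$-equivariant structure and with the trivialisation of $\beta^2(X)$ used to separate off the $\rmS^1$. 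Everything else is an application of the Bauer-Furuta formalism of Theorem~\ref{thm:Bauer}.
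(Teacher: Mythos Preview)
Your proposal is correct and follows essentially the same route as the paper: the theorem is obtained by feeding the vortex map into the Bauer--Furuta machine (Theorem~\ref{thm:Bauer}), using Theorem~\ref{thm:estimates} to verify the analytic hypotheses, and using the index computation of Section~\ref{sec:index} to identify the target group via the splitting into the real Hodge part (contributing $-1$) and the complex $\pbar_B$-family (contributing $\rho$). The paper does not spell out a formal proof beyond this, so your three-step outline (i)--(iii) is, if anything, more explicit than what appears there.
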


Since the Hodge bundle~$\beta^2(X)$ over the Picard torus~$\Pic(X,L)$ is always trivial, a standard base change adjunction, see for example~\cite[II.1.3]{Crabb+James}, shows that the group in the theorem can be identified with the group
\[
\pi^1_\TT(\Pic(X,L)^{\rho}),
\]
where the notation~$\Pic(X,L)^{\rho}$ refers to the Thom spectrum of the Riemann-Roch bundle~\hbox{$\rho=\rho(X,L)$} over the Picard torus~$\Pic(X,L)$, at least if that bundle can be represented by an actual vector bundle minus a trivial bundle, so that the Thom spectrum is a (de)suspension of a Thom space.


\section{Group actions and families}\label{sec:actions_and_families}

The preceding theory that culminated in Theorem~\ref{thm:vortex_define_stable} has extensions to both the cases where compact Lie groups act on the situation, and where one considers families. This can be set up with only minor changes to the author's previous work in the context of the Bauer-Furuta invariants, see~\cite{Szymik:thesis}, \cite{Szymik:stable}, \cite{Szymik:families}, and~\cite{Szymik:Galois}. Therefore, the form and function of these extensions in the context of the vortex equations will only be sketched here.

\subsection{Group actions}\label{sec:group_action}

If a compact Lie group~$G$ acts on~$X$ preserving a complex
line bundle~$L$, then there is an extension~$\GG$ of~$G$ by~$\TT$ such that the
homomorphism~\hbox{$G\to\Diff(X)$} lifts to a
homomorphism~\hbox{$\GG\to\Aut(X,L)$}. As explained in~\cite{Szymik:thesis} and~\cite{Szymik:Galois}, in a situation which corresponds to the case~$g(X)=0$, there is a~$\GG$-equivariant stable homotopy class which lives in~$\pi^1_\GG(\rmS^\rho)$ and maps to the vortex class under the forgetful map
\[
\pi^1_\GG(\rmS^\rho)\longrightarrow\pi^1_\TT(\rmS^\rho).
\]
In the case~$g(X)\not=0$, the Picard torus is easy to build in, replacing the sphere~$\rmS^\rho$ by the Thom space~$\Pic(X,L)^\rho$ of the index bundle. This lead to forgetful maps
\[
[\rmS^{\rho}_{\Pic(X,L)},\rmS^1_{\Pic(X,L)}]^\GG_{\Pic(X,L)}
\longrightarrow
[\rmS^{\rho}_{\Pic(X,L)},\rmS^1_{\Pic(X,L)}]^\TT_{\Pic(X,L)}
\]
and
\[
\pi^1_\GG(\Pic(X,L)^{\rho})
\longrightarrow
\pi^1_\TT(\Pic(X,L)^{\rho}),
\]
respectively.

\subsection{Families}

As explained in~\cite{Szymik:families}, there are characteristic cohomotopy classes for families with fibre~$(X,L)$ over compact bases~$B$, and a universal characteristic class which lives in the group~$\pi^1_\TT(\BAut(X,L)^\rho)$. 

A typical class of examples of families arises from group actions, and it is instructive to spell this out in detail.

\begin{example}
In the case of a group action~$\GG\to\Aut(X,L)$, as in the preceding Section~\ref{sec:group_action}, the twisted bundle construction defines a family with fiber~$(X,L)$ over the classifying space~$\BGG$, and the universal characteristic class for families can be restricted from the Thom spectrum~$\BAut(X,L)^\rho$ to the Thom spectrum~$\BGG^\rho$. In this way, we obtain a class in~\hbox{$\pi^1_\TT(\BGG^\rho)$}.
\end{example}

The family invariants of the preceding example and the equivariant invariants from Section~\ref{sec:group_action} can be compared by means of a common unification. This will be explained next.

\subsection{A common unification}

There is a universal equivariant class in~\hbox{$\pi^1_\GG(\BAut(X,L)^\rho)$} which maps to both of the classes mentioned above under the obvious maps. The diagram
\begin{center}
  \mbox{ \xymatrix@C=-20pt{ &
    \pi^1_\GG(\BAut(X,L)^\rho)\ar[dl]\ar[dr] &
    \\ \pi^1_\GG(\rmS^\rho) & &
    \pi^1_\TT(\BAut(X,L)^\rho)\ar[d]\\
    {\phantom{\pi^1_\TT(\BAut(X,L)^\rho)}} & &
    \pi^1_\TT(\BGG^\rho)  } }
\end{center}
summarizes the situation, see~\cite{Szymik:families}.


\section{The spherical case}\label{sec:g=0}

If~$X$ is a Riemann surface such that the genus~$g(X)$ vanishes, then the topology of the surface~$X$ is that of a~2-sphere~$\rmS^2$. In this case, the situation simplifies considerably, because the group~$\calG(\rmS^2)$ is connected and the Picard torus is a point. Let us write~$(\rmS^2,d)$ for~$(\rmS^2,L)$ when~$d$ is the degree~$\deg(L)$ of~$L$. 

The Riemann-Roch formula implies that the rank of~$\rho$ is~$d+1$, so that the vortex class is an element
\[
v(\rmS^2,d)\in[\rmS^{(d+1)\CC},\rmS^1]^\TT.
\]
We will give some numerical information about these groups right away.

\subsection{Computations}

While the precise structure of the groups~$[\rmS^{(d+1)\CC},\rmS^1]^\TT$ in which the vortex class lives, is only known in some cases, compare Remark~\ref{rem:structure}, there is the following general result.

\begin{proposition}\label{prop:finite}
The group~$[\rmS^{(d+1)\CC},\rmS^1]^\TT$ is always finite, and trivial for negative~$d$.
\end{proposition}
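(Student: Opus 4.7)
My plan is to use the cofiber sequence of pointed $\TT$-spaces
\[
\rmS((d+1)\CC)_+ \longrightarrow \rmS^0 \longrightarrow \rmS^{(d+1)\CC}
\]
and the fact that for $d\geq 0$ the action on the unit sphere $\rmS((d+1)\CC)$ is free with orbit space $\CC P^d$, which turns the equivariant computation into a non-equivariant one. Applying $[-,\rmS^1]^\TT$ to the Puppe extension produces the exact sequence
\[
[\rmS^1,\rmS^1]^\TT \longrightarrow [\rmS((d+1)\CC)_+,\rmS^0]^\TT \longrightarrow [\rmS^{(d+1)\CC},\rmS^1]^\TT \longrightarrow [\rmS^0,\rmS^1]^\TT.
\]
The leftmost term is the Burnside ring $A(\TT)\cong\ZZ$, since the only finite $\TT$-sets are trivial. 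Freeness of the action identifies the second term with the stable cohomotopy $\pi^0_s(\CC P^d_+)\cong\ZZ\oplus\tilde{\pi}^0_s(\CC P^d)$. The rightmost term vanishes: it equals $\pi^\TT_{-1}(\rmS^0)$, which by the tom Dieck splitting decomposes as a direct sum of $\pi^s_{-1}(BW_\TT H_+)$, all zero by connectivity of suspension spectra. Hence the target is the cokernel of a map $\ZZ\to\ZZ\oplus\tilde{\pi}^0_s(\CC P^d)$, which a short diagram chase identifies with the inclusion of the first summand. The cokernel is therefore $\tilde{\pi}^0_s(\CC P^d)$, and the Atiyah-Hirzebruch spectral sequence shows this is finite: its contributions $H^{2k}(\CC P^d;\pi^s_{2k-1})$ for $k\geq 1$ are finite by Serre's finiteness theorem for the positive stable stems of~$\rmS^0$.

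For $d<0$, set $m=-(d+1)\geq 0$ and rewrite the group stably as $\pi^\TT_0(\rmS^{1+m\CC})$. The case $m=0$, i.e. $d=-1$, gives $\pi^s_0(\rmS^1)=0$ directly. For $m\geq 1$, smashing the analogous cofiber sequence $\rmS(m\CC)_+\to\rmS^0\to\rmS^{m\CC}$ with $\rmS^1$ and applying $\pi^\TT_0$ reduces everything, by the same freeness argument, to non-equivariant stable homotopy of $\CC P^{m-1}_+\wedge\rmS^1$ and of $\rmS^1$ in non-positive degrees; these all vanish since both spectra are connective, forcing $\pi^\TT_0(\rmS^{1+m\CC})=0$.

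The main obstacle is the identification of the connecting map $\ZZ\to\pi^0_s(\CC P^d_+)$ in the Puppe sequence. One has to trace through the adjunction from the free action to verify that the equivariant collapse $\rmS((d+1)\CC)_+\to\rmS^0$ corresponds, non-equivariantly, to the collapse $\CC P^d_+\to\rmS^0$ generating the $\ZZ$-summand. Once this is in place, both the finiteness and the vanishing statements follow by inspection from connectivity of suspension spectra and Serre's theorem.
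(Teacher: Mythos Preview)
Your argument is correct and follows essentially the same route as the paper: the same cofibration sequence $\rmS((d+1)\CC)_+\to\rmS^0\to\rmS^{(d+1)\CC}$, the same identification $[\rmS((d+1)\CC)_+,\rmS^0]^\TT\cong[\CC P^d_+,\rmS^0]$ via freeness, and the same appeal to Serre's finiteness for $\tilde\pi^0_s(\CC P^d)$.

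Two points of comparison are worth recording. First, where you work to identify the connecting map $\ZZ\to\ZZ\oplus\tilde\pi^0_s(\CC P^d)$ as the inclusion of the first summand (your ``main obstacle''), the paper only needs injectivity, which it obtains more cheaply from the commuting square
\[
\xymatrix{
[\rmS((d+1)\CC)_+,\rmS^0]^{\TT}\ar[d] & [\rmS^0,\rmS^0]^{\TT}\ar[l]\ar[d]\\
[\rmS((d+1)\CC)_+,\rmS^0] & [\rmS^0,\rmS^0]\ar[l]
}
\]
with forgetful vertical maps: the right vertical arrow is an isomorphism and the bottom arrow is split injective, hence the top arrow is injective. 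Injectivity alone already forces the cokernel to be finite, since a monomorphism $\ZZ\hookrightarrow\ZZ\oplus F$ with $F$ finite has rank-zero cokernel. Second, for $d<0$ the paper simply observes $[\rmS^{(d+1)\CC},\rmS^1]^\TT\cong[\rmS^0,\rmS^{1-(d+1)\CC}]^\TT$ and invokes connectivity of the equivariant suspension spectrum $\rmS^{1-(d+1)\CC}$; your cofibre-sequence reduction works but is more laborious than necessary.
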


\begin{proof}
If~$d$ is negative, then~$-(d+1)\geqslant0$, and there is an isomorphism
\[
[\rmS^{(d+1)\CC},\rmS^1]^\TT\cong
[\rmS^0,\rmS^{1-(d+1)\CC}]^\TT
\]
which shows that this group is zero.

If~$d$ is not negative, then there is a cofibration sequence
\[
\rmS(\CC^{d+1})_+
\longrightarrow D(\CC^{d+1})_+
  \longrightarrow \rmS^{(d+1)\CC}
  \longrightarrow \Sigma \rmS(\CC^{d+1})_+
  \longrightarrow \rmS^1
 \]
with~$D(\CC^{d+1})_+\simeq_\TT \rmS^0$. It induces a long exact sequence
\begin{equation}\label{eq:les}
  [\rmS^0,\rmS^1]^{\TT}
  \longleftarrow [\rmS^{(d+1)\CC},\rmS^1]^{\TT}
  \longleftarrow [\Sigma \rmS(\CC^{d+1})_+,\rmS^1]^{\TT}
  \longleftarrow [\rmS^1,\rmS^1]^{\TT}.
\end{equation}
Since~$\TT$ acts freely on the unit sphere~$\rmS(\CC^{d+1})$, there is also an isomorphism
\[
[\Sigma \rmS(\CC^{d+1})_+,\rmS^1]^\TT\cong
[\rmS(\CC^{d+1})_+,\rmS^0]^\TT\cong
[\CC P^d_+,\rmS^0],
\]
and this group is isomorphic to the direct sum of a copy of the integers~$\ZZ$, which comes from the base point, and to a copy of the group~$[\CC P^d,\rmS^0]$, which is easily seen to be finite using Serre's finiteness of the stable stems in positive dimensions.

Clearly, the group~$[\rmS^0,\rmS^1]^{\TT}$ is zero. The group~\hbox{$[\rmS^1,\rmS^1]^{\TT}\cong[\rmS^0,\rmS^0]^{\TT}$} is (canonically) isomorphic to~$\ZZ$. In the long exact sequence induced by the cofibration sequence, it injects into the group~$[\rmS(\CC^{d+1})_+,\rmS^0]^{\TT}$, as can be seen by looking at the forgetful maps.
\begin{center}
  \mbox{ 
  \xymatrix{
[\rmS(\CC^{d+1})_+,\rmS^0]^{\TT}\ar[d]_{\text{forget}} & [\rmS^0,\rmS^0]^{\TT}\ar[l]\ar[d]^{\text{forget}}\\
[\rmS(\CC^{d+1})_+,\rmS^0] & [\rmS^0,\rmS^0]\ar[l]
    } 
    }
\end{center}
The arrow on the right is an isomorphism, and the arrow on the bottom is injective, since there is a (non-equivariant) section of the map from~$\rmS(\CC^{d+1})$ to the singleton. This shows that the map on the top is injective as well, as claimed.
\end{proof}

\begin{remark}\label{rem:structure}
It is not hard to determine the structure of the finite groups~$[\CC P^d,\rmS^0]$ up to isomorphism for small values of~$d$: For~$d=0$ the group~$[\CC P^0,\rmS^0]=[*,\rmS^0]$
is trivial, while for~\hbox{$d=1$} one is concerned with the group~$[\CC
P^1,\rmS^0]=[\rmS^2,\rmS^0]$ which has order~$2$. For~$d=2$ the group~$[\CC
P^2,\rmS^0]$ is again trivial. 
\end{remark}

\subsection{An integral invariant}

Let us review a way to extract integral information from the vortex classes, following~\cite{Bauer:survey}. The starting point is the isomorphism
\[
[\rmS(\CC^{d+1})_+,\rmS^0]^{\TT}\cong[\CC P^d_+,\rmS^0],
\]
which already appeared in the course of the proof of Proposition~\ref{prop:finite}. The right hand side maps to
\[
[\CC P^d_+,\upH\ZZ]=\upH^0(\CC P^d;\ZZ)=\ZZ,
\]
using the unit~$\rmS^0\to\upH\ZZ$ of the integral Eilenberg-MacLane spectrum~$\upH\ZZ$.

Now we look at the long exact sequence~\eqref{eq:les} again. Since the group on the left is zero, every element in the group~$[\rmS^{(d+1)\CC},\rmS^1]^{\TT}$ can be lifted to an element in
the group~\hbox{$[\rmS(\CC^{d+1})_+,\rmS^0]^{\TT}$}. However, the lift will depend on the choice of a null-homotopy. Therefore, we have to make a choice, unless we have a preferred choice of null-homotopy. The set of possible choices is a torsor for the image of the group~\hbox{$[\rmS^1,\rmS^1]^{\TT}=\ZZ$} which is mapped injectively, as we have already seen in the course of the proof of Proposition~\ref{prop:finite}.

\begin{remark}
The traditional way to derive numerical invariants from such situations is to integrate over~$\calM^\times(X,L)$ the top power of the characteristic~$2$-form which is associated to the principal~$\TT$-bundle~$\calV^\times(X,L)\to\calM^\times(X,L)$.
\end{remark}

\subsection{Families of spheres}

So far we have only considered single 2-spheres. Now we shall turn our attention to families of 2-spheres, in particular to the universal family. 

The group~$\Diff(\rmS^2)$ of orientation preserving diffeomorphisms of a 2-sphere is homotopy equivalent to~$\SO(3)$, by Smale's theorem~\cite{Smale}. We have
\[
\upH^*(\BSO(3);\ZZ)\cong\ZZ[e,p_1]/(2e),
\]
and the total space of the universal family of 2-spheres is homotopy equivalent to~$\BSO(2)\simeq\BTT\simeq\CC P^\infty$, because this family can be modelled as
\[
\rmS^2=\SO(3)/\SO(2)\longrightarrow\BSO(2)\longrightarrow\BSO(3).
\] 

There is an equivalence~$\calG(\rmS^2)\simeq\TT$, and the extension
\[
\calG(\rmS^2)\longrightarrow\Aut(\rmS^2,d)\longrightarrow\Diff(\rmS^2)
\]
is equivalent to the split extension
\[
\TT\to\TT\times\SO(3)\to\SO(3),
\]
if the degree~$d$ is even, or to
\[
\TT\to\U(2)=\Spinc(3)\to\SO(3),
\]
if the degree~$d$ is odd, compare Remark~\ref{rem:gerbe}. 

Because~$\Aut(\rmS^2,d)$ is connected in both of the cases, the~(flat) Hodge bundle over the classifying space~$\BAut(\rmS^2,d)$ is trivial. Since the Picard torus~$\Pic(\rmS^2,d)$ is a point in this case, the Riemann-Roch bundle is trivial as well. Therefore, the vortex class for a family that is classified by a map~\hbox{$B\to\BAut(\rmS^2,d)$} lives in the group
\[
[\rmS^{(d+1)\underline{\CC}_B}_B,\rmS^1_B]^\TT_B,
\]
which is isomorphic to~$[\Sigma^{(d+1)\CC}B_+,\rmS^1]^\TT$, again by a base change adjunction. These groups may very well be non-trivial, even rationally.

For each fibre, the family invariant gives rise to an element in ~$[\rmS^{(d+1)\CC},\rmS^1]^\TT$ by restriction. In this way, we get the invariant of the fibre back, compare~\cite[Section~3.1]{Szymik:families}.



\vfill

\parbox{\linewidth}{Department of Mathematical Sciences\\
University of Copenhagen\\
Universitetsparken 5\\
2100 Copenhagen \O\\
DENMARK\\
\phantom{ }\\
\href{mailto:szymik@math.ku.dk}{szymik@math.ku.dk}}\\
\phantom{ }\\
\href{http://www.math.ku.dk/~xvd217}{www.math.ku.dk/$\sim$xvd217}

\end{document}